 \newtheorem{theorem}{Theorem}[section]
 \newtheorem{corollary}[theorem]{Corollary}
 \theoremstyle{definition}
 \theoremstyle{remark}
 \newtheorem{remark}[theorem]{Remark}
  \numberwithin{equation}{section} 
\newenvironment{enum1}{\begin{enumerate}[label=\textup{(\arabic*)}]}{\end{enumerate}}
\newenvironment{enumi}{\begin{enumerate}[label=\textup{(\roman*)}]}{\end{enumerate}}
\newenvironment{enuma}{\begin{enumerate}[label=\textup{(\alph*)}]}{\end{enumerate}}
\renewcommand{\epsilon}{\varepsilon}
\renewcommand{\phi}{\varphi}
\renewcommand{\theta}{\vartheta}
\DeclareMathOperator{\sform}{\mathfrak{s}}
\DeclareMathOperator{\tform}{\mathfrak{t}}
\DeclareMathOperator{\wform}{\mathbf{w}}
\DeclarePairedDelimiterX\sipt[2]{(}{)_{\tform}}{#1\,\delimsize\vert\,#2}
\DeclarePairedDelimiterX\sipv[2]{(}{)_{v}}{#1\,\delimsize\vert\,#2}
\DeclarePairedDelimiterX\sipw[2]{(}{)_{w}}{#1\,\delimsize\vert\,#2}
\newcommand{\abs}[1]{\lvert#1\rvert}
\newcommand{\dupN}{\mathbb{N}}
\newcommand{\seq}[1]{(#1_{n})_{n\in\dupN}}
\newcommand{\dupC}{\mathbb{C}}
\newcommand{\ran}{\operatorname{ran}}
\newcommand{\leh}{\mathscr{L}(E;\hil)}
\newcommand{\lef}{\mathscr{L}(E,F)}
\newcommand{\lefpoz}{\mathscr{L}_+(E,F)}
\newcommand{\lhaf}{\mathscr{L}(\hila,F)}
\newcommand{\D}{\mathscr{D}}
\newcommand{\hil}{\mathcal H}
\newcommand{\hila}{\hil_A}
\newcommand{\bh}{\mathscr{B}(\hil)}
\DeclarePairedDelimiterX\sip[2]{(}{)}{#1\,\delimsize\vert\,#2}
\DeclarePairedDelimiterX\siptilde[2]{(}{)_{\!_{\widetilde{A}}}}{#1\,\delimsize\vert\,#2}
\DeclarePairedDelimiterX\sipf[2]{(}{)_{f}}{#1\,\delimsize\vert\,#2}
\DeclarePairedDelimiterX\sipg[2]{(}{)_{g}}{#1\,\delimsize\vert\,#2}
\DeclarePairedDelimiterX\siptw[2]{(}{)_{\tform+\wform}}{#1\,\delimsize\vert\,#2}
\DeclarePairedDelimiterX\set[2]{\{}{\}}{#1\,:\,#2}
\DeclarePairedDelimiterX\dual[2]{\langle}{\rangle}{#1,#2}
\DeclarePairedDelimiterX\sipa[2]{(}{)_{\!_A}}{#1\,\delimsize\vert\,#2}
\DeclarePairedDelimiterX\sipc[2]{(}{)_{\!_C}}{#1\,\delimsize\vert\,#2}
\DeclarePairedDelimiterX\sipab[2]{(}{)_{\!_{A+B}}}{#1\,\delimsize\vert\,#2}
\DeclarePairedDelimiterX\sipb[2]{(}{)_{\!_B}}{#1\,\delimsize\vert\,#2}
\newcommand{\anti}[1]{\bar{#1}'}
\newcommand{\lAf}{\lambda(A,f)}
\title[Supremum and infimum of positive operators]{Operators on anti-dual pairs:\\ Supremum and infimum of positive operators}
\author[Zs. Tarcsay]{Zsigmond Tarcsay}
\address{%
Zs. Tarcsay \\ Department of Mathematics\\ Corvinus University of Budapest\\ IX. F\H ov\'am t\'er 13-15.\\ Budapest
H-1093 \\ Hungary\\ and Department of Applied Analysis  and Computational Mathematics\\ E\"otv\"os Lor\'and University\\ P\'azm\'any P\'eter s\'et\'any 1/c.\\ Budapest H-1117\\ Hungary}
\email{tarcsay.zsigmond@uni-corvinus.hu}
\author[\'A. G\"ode]{\'Abel G\"ode}
\address{%
\'A. G\"ode \\ Department of Applied Analysis  and Computational Mathematics\\ E\"otv\"os Lor\'and University\\ P\'azm\'any P\'eter s\'et\'any 1/c.\\ Budapest H-1117\\ Hungary}
\email{godeabel@student.elte.hu}
\subjclass[2010]{Primary 47B65, 46A20}
\keywords{Positive operator, anti-dual pair, supremum, infimum, ordering, Lebesgue decomposition, strength function}
\dedicatory{Dedicated to Professor Zolt\'an Sebesty\'en on the occasion of his 80th birthday}
\begin{document}

\begin{abstract} Our purpose in this note is to investigate the order properties of positive operators from a locally convex space into its conjugate dual. We introduce a natural generalization of the Busch-Gudder strength function and we prove  Kadison's anti-lattice theorem and Ando's result on the infimum of positive operators in that context.  
\end{abstract}

\maketitle

\section{Introduction}
Let $\hil$ be a complex (possibly infinite dimensional) Hilbert space. It is well known that the set of bounded self-adjoint operators $B_{sa}(\hil)$ is a partially ordered set with respect to the most natural L\"owner order
\begin{equation*}
    A\leq B\quad \iff\quad \sip{Ax}{x}\leq \sip{Bx}{x}\qquad (\forall x\in\hil).
\end{equation*}
That is to say, `$\leq $' is a reflexive, transitive and anti-symmetric relation. It is however proved by Kadison \cite{kadison1951}  that the partially ordered set $(B_{sa}(\hil),\leq )$ is  as far from being a lattice as possible. In fact, the supremum $A\vee B$ of the self-adjoint operators $A$ and $B$ can exist only in the trivial case when they are comparable.  The same is true for the question of the infimum, as we have 
\begin{equation*}
    A\wedge B=-((-A)\vee(-B)).
\end{equation*}
If we examine the same questions in the cone $B_+(\hil)$ of positive operators, the answer does not change in the case of the supremum.
%As for the case of the supremum, the situation obviously does not change at all if we look at it among the cone of positive operators. 
However, it is not difficult to see that the infimum of two orthogonal projections  $P,Q$ with disjoint ranges is $P\wedge Q=0$ in $B_+(\hil)$. This simple example suggests that the infimum problem considered over the cone of positive operators is a more sophisticated problem than that over $B_{sa}(\hil)$.

The latter issue was completely solved by T. Ando \cite{ando1999problem}. He  showed that the infimum problem is closely related to the so-called Lebesgue-type decomposition of positive operators, the theory of which was also developed by him \cite{ando1976lebesgue}. Namely, given two positive operators $A,B$ on $\hil$, there exists two positive operators $B_a,B_s$ on $\hil$, where $B_a$ is absolutely continuous to $A$, whilst $B_s$ is singular to $A$,  such that $B_a+B_s=B$ (see \cite{ando1976lebesgue}). It is also proved by Ando that such a decomposition is not unique in general, but there is a distinguished positive operator (denoted by $[B]A\coloneqq B_a$) which satisfies the decomposition and which is the largest among those operators $C\geq 0$ such that $C\leq A$ and $C\ll B$. (We refer the reader to \cite{ando1976lebesgue} and \cite{tarcsay2013lebesgue} for the details.) This maximal operator $[B]A$ is called the \textit{$B$-absolutely continuous part} of $A$.

From the maximality property of the absolutely continuous parts it is easy to check that if $[B]A$ and $[A]B$ are  comparable, then the infimum of $A$ and $B$ exists, namely,
\begin{equation*}
    A\wedge B=\max\{[A]B,[B]A\}.
\end{equation*}
The main result of Ando's paper \cite{ando1999problem} states that  the reverse of this is also true: $A\wedge B$ exists if and only if the corresponding absolutely continuous parts are comparable.

The  aim of this note is to investigate the supremum and infimum problem in a much more general context, namely, among the positive cone of operators on so called anti-dual pairs. By anti-dual pair we mean a pair $(E,F)$  of complex vector spaces  which are connected by a so-called anti-duality function $\dual{\cdot}{\cdot }:F\times E\to\dupC$. The latter differs from the well-known duality function (see \cite{Schaefer}*{Chapter IV}) only in that it is conjugate linear in its second variable. The notion of anti-dual pairs allows us to define positivity of linear operators of  type $A:E\to F$ in a way that is formally identical to the positivity of operators over Hilbert spaces (cf. \cite{TARCSAY2020Lebesgue} and \cite{TarcsayMathNach}). The set  $\lefpoz$ of positive operators from $E$ to $F$ is then a partially ordered set via the ordering
\begin{equation*}
    A\leq B\qquad\overset{def}{\Longleftrightarrow}\qquad \dual{Ax}{x}\leq \dual{Bx}{x}\qquad (\forall x\in E).
\end{equation*}
In this article we are going to present the corresponding analogous versions to Kadison's and Ando's theorems in the ordered set $(\lefpoz,\leq )$. In the case of supremum, the key is a corresponding generalization of the  Busch-Gudder strength function \cite{busch1999effects} to positive operators in $\lefpoz$. This will enable us to provide a numerical characterization of the ordering. The solution to the infimum problem is based on the  Lebesgue decomposition theory for operators on anti-dual pairs developed in \cite{TARCSAY2020Lebesgue}.

\section{Preliminaries}

%The aim of this section is to collect all the technical ingredients that are necessary to read the paper. First of all, we introduce the concept of \textit{anti-dual pairs}.
Throughout the paper, let   $E$ and $F$ be complex vector spaces which are intertwined via a function $$\dual\cdot\cdot:F\times E\to\dupC,$$ which is linear in its first argument and conjugate linear in its second one, so that it separates the points of $E$ and $F$. We shall refer to $\dual\cdot\cdot$ as \emph{anti-duality} function and the triple  $(E,F,\dual\cdot\cdot)$ will be called an \emph{anti-dual pair}. and shortly denoted by $\dual FE$. 

The prototype of anti-dual pairs is the triple $(\hil,\hil,\sip{\cdot}{\cdot})$ , where $\hil$ is a Hilbert space and $\sip{\cdot}{\cdot}$ is a scalar product over $\hil$. In that case,  of course, the concept of symmetric and positive operators coincides with the usual concept of those in functional analysis. 

However, the most general example of anti-dual pairs is $(X,\anti{X},\dual{\cdot}{\cdot})$, where $X$ is a locally convex Hausdorff space, $\anti{X}$ denotes the set of continuous and conjugate linear functionals on $X$, and $\dual{\cdot}{\cdot}$ is the  evaluation 
\begin{equation*}
    \dual{f}{x}\coloneqq f(x)\qquad (x\in X, f\in\anti X). 
\end{equation*}

If $(E,F)$ is an anti-dual pair, then we may consider the corresponding weak topologies $w\coloneqq w(E,F)$ on $E$, and $w^*\coloneqq w(F,E)$ on $F$, respectively. Both $(E,w)$ and $(F,w^*)$ are locally convex Hausdorff spaces such that 
\begin{equation}\label{E:E'=F}
    \anti E=F,\qquad F'=E.
\end{equation} 
This fact and \eqref{E:E'=F} enable us to define the adjoint (that is, the topological transpose) of a weakly continuous operator. Let $\dual{F_1}{E_1}$ and $\dual{F_2}{E_2}$ be anti-dual pairs and  $T:E_1\to F_2$ a weakly continuous linear operator, then the (necessarily weakly continuous) linear operator $T^*:E_2\to F_1$ satisfying \begin{equation*}%\label{E:w-adjoint}
    \dual{Tx_1}{x_2}_2=\overline{\dual{T^*x_2}{x_1}_1},\qquad  x_1\in E_1,x_2\in E_2
\end{equation*} 
is called the adjoint of $T$. In the following, we will use two particularly important special cases of this: on the one hand, the adjoint $T^*$ of a weakly continuous operator $T:E\to F$ is also an operator of type $E\to F$. On the other hand, if $\hil$ is a Hilbert space, then the adjoint of a weakly continuous operator $T:E\to \hil$ acts as and operator $T^*:\hil\to F$, so that their composition   $T^*T:E\to F$ satisfies
\begin{equation}\label{E:TadjTxx}
    \dual{T^*Tx}{x}=\sip{Tx}{Tx} \qquad (x\in E).
\end{equation}
A linear operator $S:E\to F$ is called \textit{symmetric} if 
\begin{equation*}
    \dual{Sx}{y}=\overline{\dual{Sy}{x}}\qquad (x,y\in E),
\end{equation*}
and positive, if 
\begin{equation*}
    \dual{Sx}{x}\geq 0\qquad(x\in E).
\end{equation*}
It can be readily checked that  every positive operator is symmetric, and every symmetric operator $S$ is weakly continuous (see \cite{TARCSAY2020Lebesgue}). We shall denote the set of weakly continuous operators $T:E\to F$ by $\lef$, while we write $\lefpoz$ for the set of positive operators $A:E\to F$.  

According to \eqref{E:TadjTxx}, $T^*T\in\lef$ is a positive operator if $T\in\leh$. However, if we assume $F$ to be $w^*$-sequentially complete, then we can also state the converse: every positive operator $A\in\lefpoz$ can be written in the form $A=T^*T$ for some auxiliary Hilbert space $\hil$ and $T\in\leh$ (cf. \cite{TARCSAY2020Lebesgue}).  Recall that the anti-dual pair $(X,\anti X)$ is $w^*$-sequentially complete if $X$ is a barreled space (e.g., a Banach space). Also,  $(X,\bar X^*)$ is $w^*$-sequentially complete with $X$ being an arbitrary vector space and $\bar X^*$ its algebraic anti-dual space. 

Let us now recall briefly the Hilbert space factorization method of a positive operator $A$. Let $(E,F)$ be a $w^*$-sequentially anti-dual pair and let $A\in\lefpoz$. Then
\begin{equation*}
    \sipa{Ax}{Ay}\coloneqq \dual{Ax}{y}\qquad (x,y\in E).
\end{equation*}
defines an inner product on the range space $\ran A$ under which it becomes a pre-Hilbert space. We shall denote the completion by $\hila$.  The canonical embedding operator
\begin{equation}\label{E:J_A}
        J^{}_A(Ax)=Ax\qquad (x\in E)
    \end{equation}
of $\ran A\subseteq \hila$ into $F$ is weakly continuous. By $w^*$-sequentially completeness, it uniquely extends to a weakly continuous operator $J_A\in \lhaf$. It can be checked that the adjoint operator $J_A^*\in\mathscr{L}(E,\hila)$  satisfies 
\begin{equation}\label{E:J*}
        J_A^*x=Ax\in\hila \qquad (x\in E),
\end{equation}
that yields the fallowing factorization of $A$:
\begin{equation}\label{E:JAJA}
    A=J_A^{}J_A^*.
\end{equation}

In order to discuss  the infimum problem of positive operators, we will need some concepts and results related to the  Lebesgue decomposition theory of positive operators. Hence, for sake of the reader, we briefly summerize the content of \cite{TARCSAY2020Lebesgue} which is essential for the understanding of this article. The details can be found in the mentioned paper. 

We say that the positive operator $B\in\lefpoz$ is  absolutely continuous with respect to another positive operator $A\in\lefpoz$ (in notation, $B\ll A$) if, for every sequence $\seq x$ of $E$, $\dual{Ax_n}{x_n}\to0$ and $\dual{B(x_n-x_m)}{x_n-x_m}\to0$ imply $\dual{Bx_n}{x_n}\to0$.  $A$ and $B$ are said to be mutually singular (in notation, $A\perp B$) if the only positive operator $C\in\lefpoz$ for which $C\leq A$ and $C\leq B$ are satisfied is $C=0$. It is easy to check that $B\leq A$ imples $B\ll A$, however the converse is apparently not true (cf. \cite{TARCSAY2020Lebesgue}*{Theorem 5.1}).

In \cite{TARCSAY2020Lebesgue}*{Theorem 3.3} it was proved that every positive operator $B$ has a Lebesgue-type decomposition with respect to any other positive operator $A$. More precisely, there exists a pair $B_a$ and $B_s$ of positive operators such that 
\begin{equation}\label{E:LebDec}
    B=B_a+B_s,\qquad B_a\ll A,\qquad B_s\perp A.
\end{equation}
Such a decomposition is not unique in general (cf \cite{TARCSAY2020Lebesgue}*{Theorem 7.2}, however,  there is a unique operator $B_a\in\lefpoz$  satisfying \eqref{E:LebDec} which is maximal among those positive operators $C$ such that $C\leq B$ and $C\ll A$. We shall call this uniquely determined operator $B_a$ the $A$-\textit{absolutely continuous part} of $B$, and adopting Ando's \cite{ando1976lebesgue} notation, we shall write $[A]B\coloneqq B_a$ for it. 

%%%%%%%%%%%%%%%%%%%%%%%%%%%%%%%%%%%%%%%%%%%%%%%%%%%%%%

\section{The strength of a positive operator}
Throughout the section let  $(E,F)$ be a  $w^*$-sequentially complete anti-dual pair. Let us introduce the partial order on  $\lefpoz$ by 
\begin{equation*}
    A\leq B \qquad \iff \qquad\dual{Ax}{x}\leq \dual{Bx}{x}\qquad (x\in E).
\end{equation*}
In what follows, inspired by  the paper \cite{busch1999effects} of Busch and Gudder, we associate a function to each positive operator $A$ (called the strength function) which can be used to characterize that ordering. 

Let  $f\in F$ be a non-zero vector and set
\begin{equation*}
    (f\otimes f)(x)\coloneqq \overline{\dual fx}\cdot f\qquad(x\in E),
\end{equation*}
so that $f\otimes f\in\lefpoz$ is a rank one positive operator with range space $\dupC\cdot f$. For a given positive operator $A\in\lefpoz$ we set 
 \begin{equation*}
    \lAf\coloneqq \sup \set{t\geq0}{t\cdot f\otimes f\leq  A}.
\end{equation*}  
Following the terminology of \cite{busch1999effects}, the nonnegative (finite) number $\lAf$ will be called the strength of $A$ along the ray $f$, whilst the  function  
\begin{equation*}
\lambda(A,\cdot): F\setminus\{0\}\to[0,+\infty)    
\end{equation*}
is the \textit{strength function} of $A$. To see that  $\lAf$ is always finite, consider an $x\in E$  such that $\dual{f}{x}\neq 0$. Then $\lAf=+\infty$ would result  equality $\dual{Ax}{x}=+\infty$, which is impossible.  

At this point, we note that in \cite{busch1999effects}, the strength function was only defined along vectors from the unit sphere of  a Hilbert space. In that case, the strength function has the uniform upper bound $\|A\|$.  By the above interpretation of the strength function, $\lambda(A,\cdot)$ will not be bounded, but this fact is not of considerable importance for the applications.

In our first result, we examine along which 'rays' $f$ the strength function takes a positive value. The factorization \eqref{E:JAJA} of the positive operator $A$  and the auxiliary Hilbert space $\hila$ will play an important role in this. 
\begin{theorem}\label{T:3.1}
Let $A\in\lefpoz$ and $f\in F$, $f\neq 0$. The following are equivalent:
\begin{enumerate}[label=\textup{(\roman*)}]
    \item $\lAf>0$,
    \item there is a constant $m\geq0$ such that $\abs{\dual fx}^2\leq m\cdot\dual{Ax}{x}$, $(x\in E),$ 
    \item there is a (unique) $\xi_f\in\hila$ such that $J_A\xi_f=f$.
\end{enumerate}
In any case, we have 
\begin{equation}
    \lAf=\frac{1}{\|\xi_f\|_A^2}=\frac{1}{m_f},
\end{equation}
where $m_f>0$ is the smallest constant $m$ that satisfies (ii).
\end{theorem}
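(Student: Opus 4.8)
The plan is to run the cycle (ii)$\,\Rightarrow\,$(iii)$\,\Rightarrow\,$(ii) through the factorization \eqref{E:JAJA} of $A$ by means of the auxiliary Hilbert space $\hila$, to dispose of (i)$\,\Leftrightarrow\,$(ii) by an elementary scalar computation, and to keep track of the optimal constants so that the displayed identity comes out for free.

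First I would rewrite the order relation. Because $\dual{(f\otimes f)x}{x}=\abs{\dual fx}^2$, the inequality $t\cdot f\otimes f\le A$ is precisely $t\,\abs{\dual fx}^2\le\dual{Ax}{x}$ for every $x\in E$, and the set $\set{t\ge0}{t\cdot f\otimes f\le A}$ is a down-closed interval. Hence a number $m>0$ satisfies (ii) if and only if $\tfrac1m\le\lAf$, while $m=0$ cannot satisfy (ii) since $f\ne0$ and the anti-duality separates the points of $F$. This yields (i)$\,\Leftrightarrow\,$(ii) at once, and shows that whenever they hold the smallest admissible constant is $m_f=1/\lAf=\sup\set{\abs{\dual fx}^2/\dual{Ax}{x}}{x\in E,\ \dual{Ax}{x}>0}$.

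The core is (ii)$\,\Leftrightarrow\,$(iii), and here I would use that the inner product of $\hila$ is determined by $\sipa{Ax}{Ay}=\dual{Ax}{y}$, so that $\|Ax\|_A^2=\dual{Ax}{x}$. Assuming (ii), the assignment $\phi(Ax)\coloneqq\dual fx$ is well defined on the dense subspace $\ran A\subseteq\hila$ (if $\|Ax\|_A=0$ then, by (ii), $\dual fx=0$), conjugate-linear, and bounded with $\|\phi\|^2=m_f$ by the expression for $m_f$ above. Extending $\phi$ to $\hila$ and applying the Riesz representation theorem to the linear functional $\overline\phi$ produces a unique $\xi_f\in\hila$ with $\|\xi_f\|_A=\|\phi\|$ and $\phi(\eta)=\sipa{\xi_f}{\eta}$ for all $\eta\in\hila$. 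Putting $\eta=Ax=J_A^*x$ and invoking \eqref{E:J*} together with the defining relation of the adjoint, $\dual fx=\sipa{\xi_f}{J_A^*x}=\overline{\sipa{J_A^*x}{\xi_f}}=\dual{J_A\xi_f}{x}$ for all $x\in E$, hence $J_A\xi_f=f$ by separation in $F$; this is (iii). Uniqueness in (iii) follows because $J_A$ is injective: $J_A\xi=0$ implies $\sipa{Ax}{\xi}=\overline{\dual{J_A\xi}{x}}=0$ for all $x$, whence $\xi=0$ by density of $\ran A$. Conversely, assuming (iii), \eqref{E:J*} and the adjoint relation give $\dual fx=\overline{\sipa{Ax}{\xi_f}}$, so the Cauchy--Schwarz inequality in $\hila$ gives $\abs{\dual fx}^2\le\|\xi_f\|_A^2\,\|Ax\|_A^2=\|\xi_f\|_A^2\,\dual{Ax}{x}$, i.e.\ (ii) with $m=\|\xi_f\|_A^2$.

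Finally, to obtain the numerical formula I would just compare constants: the last inequality shows $m_f\le\|\xi_f\|_A^2$, while the Riesz step gave $\|\xi_f\|_A=\|\phi\|=\sqrt{m_f}$; therefore $m_f=\|\xi_f\|_A^2$, and combined with $\lAf=1/m_f$ this reads $\lAf=1/\|\xi_f\|_A^2=1/m_f$. The points that will need genuine care are the bookkeeping of the complex conjugates in the Riesz step and in the adjoint identity $\dual{J_A\xi}{x}=\overline{\sipa{J_A^*x}{\xi}}$ (since both $\phi$ and $\dual\cdot\cdot$ are conjugate-linear in the relevant variable), and checking that the vector delivered by the Riesz theorem is exactly the $\xi_f$ appearing in (iii) — which is precisely what the density of $\ran A$ in $\hila$, i.e.\ the injectivity of $J_A$, guarantees. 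I do not anticipate any further obstacle.
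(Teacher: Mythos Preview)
Your proof is correct and follows essentially the same route as the paper's own argument: (i)$\Leftrightarrow$(ii) by the scalar rewriting $\dual{(f\otimes f)x}{x}=\abs{\dual fx}^2$, (ii)$\Rightarrow$(iii) via the Riesz representation of the conjugate-linear functional $\phi(Ax)=\dual fx$ on $\ran A\subseteq\hila$, and (iii)$\Rightarrow$(ii) by Cauchy--Schwarz in $\hila$. You are in fact slightly more careful than the paper, which leaves the uniqueness of $\xi_f$ (your injectivity argument for $J_A$) and the explicit tracking of the optimal constant $m_f=\|\xi_f\|_A^2$ to the reader.
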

\begin{proof}
(i)$\Rightarrow$(ii): Fix a real number $0<t<\lAf$, then 
\begin{equation*}
    \dual{(f\otimes f)x}{x}\leq \frac1t\cdot\dual{Ax}{x}\qquad x\in E, 
\end{equation*}
that implies (ii). 

(ii)$\Rightarrow$(iii): Inequality (ii) expresses just that  
\begin{equation*}
    \phi(Ax)\coloneqq \dual fx,\qquad x\in E,
\end{equation*}
defines a continuous conjugate linear functional from $\ran A\subseteq \hila$ to $\dupC$, namely $\|\phi\|^2\leq m$. Denote by $\xi_f\in\hila$ the corresponding Riesz representing vector. Then 
\begin{equation*}
    \dual{f}{x}=\sipa{\xi_f}{Ax}=\sipa{\xi_f}{J_A^*x}=\dual{J_A\xi_f}{x},\qquad x\in E,
\end{equation*}
and hence $f=J_A\xi_f$.

(iii)$\Rightarrow$(i): Suppose that $J_A\xi_f=f$ for some $\xi_f\in\hila$. Then 
\begin{equation*}
    \abs{\dual fx}^2=\abs{\sipa{\xi_f}{Ax}}^2\leq \|\xi_f\|^2_A\dual{Ax}{x},
\end{equation*}
hence $\lambda f\otimes f\leq A$ with $\lambda=\|\xi_f\|_A^{-2}$.
\end{proof}
As a corollary we retrieve \cite{busch1999effects}*{Theorem 3}. The proof presented here  is significantly shorter and simpler.
\begin{corollary}
Let $\hil$ be a Hilbert space, $A\in\bh$ and $f\in \hil$, then $\lambda(A,f)>0$ if and only if $f\in\ran A^{1/2}$. In that case,  
\begin{equation}\label{E:BGthm3}
    \lambda(A,f)=\frac{1}{\|A^{-1/2}f\|^2},
\end{equation}
where $A^{-1/2}$ denotes the (possibly unbounded) inverse of $A^{1/2}$ restricted to $\ker A^{\perp}$.
\end{corollary}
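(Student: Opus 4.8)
The plan is to derive the corollary from Theorem~\ref{T:3.1} applied to the canonical anti-dual pair $(\hil,\hil,\sip\cdot\cdot)$, by identifying the auxiliary Hilbert space $\hila$ and the embedding $J_A$ in the classical setting. First I would recall that when $(E,F)=(\hil,\hil)$ and $A\in\bh$ is positive, the inner product $\sipa{Ax}{Ay}=\sip{Ax}{y}$ on $\ran A$ can be rewritten, using $A=A^{1/2}A^{1/2}$ and self-adjointness, as $\sip{A^{1/2}x}{A^{1/2}y}$; hence the map $A^{1/2}x\mapsto Ax$ extends to an isometry from $\overline{\ran A^{1/2}}$ onto $\hila$, under which the completion $\hila$ is unitarily identified with $\overline{\ran A^{1/2}}=\ker A^{\perp}$. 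Under this identification one checks that the embedding $J_A\in\mathscr{L}(\hila,\hil)$ corresponds precisely to $A^{1/2}$ (restricted to $\ker A^\perp$, viewed as mapping into $\hil$), since $J_A(Ax)=Ax=A^{1/2}(A^{1/2}x)$ and both sides are weakly/norm continuous.

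With this dictionary in hand, condition (iii) of Theorem~\ref{T:3.1} — the existence of $\xi_f\in\hila$ with $J_A\xi_f=f$ — translates directly into the requirement that $f\in\ran(A^{1/2}|_{\ker A^\perp})=\ran A^{1/2}$, and the equivalence (i)$\iff$(iii) then gives $\lambda(A,f)>0\iff f\in\ran A^{1/2}$. For the quantitative formula, Theorem~\ref{T:3.1} states $\lambda(A,f)=\|\xi_f\|_A^{-2}$; under the unitary identification the norm $\|\cdot\|_A$ on $\hila$ pulls back to the $\hil$-norm on $\ker A^\perp$, and the vector $\xi_f$ corresponding to $f=A^{1/2}\xi_f$ is exactly $\xi_f=A^{-1/2}f$ in the sense defined in the statement (the inverse of $A^{1/2}$ restricted to $\ker A^\perp$). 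Therefore $\|\xi_f\|_A^2=\|A^{-1/2}f\|^2$, which yields \eqref{E:BGthm3}.

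The one point requiring genuine care — and the step I expect to be the main obstacle — is the rigorous identification of the abstract completion $\hila$ with $\ker A^{\perp}$ together with $J_A\leftrightarrow A^{1/2}$; in particular one must verify that the isometry $A^{1/2}x\mapsto Ax$ is well defined (i.e.\ $A^{1/2}x=A^{1/2}y$ forces $Ax=Ay$, which is immediate from $A^{1/2}$ being bounded), that it has dense range $\ran A$ in $\hila$, hence extends to a unitary, and that this unitary intertwines $J_A$ with $A^{1/2}$ by checking the defining relations \eqref{E:J_A}–\eqref{E:J*} and using uniqueness of the continuous extension. Once that identification is set up, the corollary follows by pure translation, and the remaining work (reconciling $\|\xi_f\|_A$ with the $\hil$-norm and $\xi_f$ with $A^{-1/2}f$) is routine.
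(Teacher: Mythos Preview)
Your proposal is correct and close in spirit to the paper's argument, but the paper takes a slightly shorter route that avoids building the unitary identification $\hila\cong\ker A^{\perp}$ explicitly. For the equivalence, the paper simply observes that $A^{1/2}A^{1/2}=A=J_A^{}J_A^*$ forces $\ran A^{1/2}=\ran J_A$ (the standard Douglas-type range identity), and then invokes (i)$\iff$(iii) of Theorem~\ref{T:3.1}. For the formula \eqref{E:BGthm3}, instead of translating $\xi_f$ through the unitary, the paper goes back to characterization~(ii) of Theorem~\ref{T:3.1}: since $\dual{Ax}{x}=\|A^{1/2}x\|^2$, the optimal constant $m_f$ is exactly the squared norm of the functional $\phi(A^{1/2}x)\coloneqq\sip{x}{f}$ on $\ran A^{1/2}\subseteq\hil$, whose Riesz representative $\zeta\in\overline{\ran A^{1/2}}=\ker A^{\perp}$ satisfies $A^{1/2}\zeta=f$ and $\|\zeta\|^2=m_f$. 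Your approach has the advantage of making the isomorphism $\hila\cong\ker A^{\perp}$, $J_A\leftrightarrow A^{1/2}$ completely explicit (and in effect re-proves the range identity the paper takes for granted); the paper's approach is marginally quicker because it works directly in $\hil$ via part~(ii) rather than transporting part~(iii) through a unitary.
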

\begin{proof}
    The first part of the statement is clear because $A^{1/2}A^{1/2}=A=J_A^{}J_A^*$ implies $\ran A^{1/2}=\ran J_A$. In order to prove \eqref{E:BGthm3}, consider the linear functional 
    \begin{equation*}
        \phi:\ran A^{1/2}\to \dupC,\qquad \phi(A^{1/2}x)\coloneqq \sip xf\qquad (x\in \hil)
    \end{equation*}
    which is well-defined and bounded with norm $\|\phi\|^2=\lambda(A,f)^{-1}$. By the Riesz representation theorem, there is a unique vector $\zeta\in\overline{\ran A^{1/2}}=\ker A^{\perp}$ such that  
    \begin{equation*}
        \sip xf=\sip{A^{1/2}x}{\zeta}=\sip{x}{A^{1/2}\zeta}\qquad (x\in \hil).
    \end{equation*}
    Hence $A^{1/2}\zeta=f$ and thus $\zeta=A^{-1/2}f$. Furthermore, $\|\zeta\|^2=\|\phi\|^2=\lambda(A,f)^{-1}$ that completes the proof.
\end{proof}
Below we establish the most useful property of the  strength function for our purposes. Namely,  it can be used to characterize the ordering.
\begin{theorem}
Let $A,B\in\lefpoz$ be positive operators then the following assertions are equivalent:
\begin{enumerate}[label=\textup{(\roman*)}]
    \item $A\leq B$,
    \item $\lAf\leq \lambda (B,f)$ for every $0\neq f\in F$.
\end{enumerate}
\end{theorem}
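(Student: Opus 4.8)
The plan is to prove the two implications separately, the first being immediate and the second resting on a judicious choice of ray.

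\textbf{(i)$\Rightarrow$(ii).} This is pure monotonicity: if $A\leq B$ and $t\cdot f\otimes f\leq A$ for some $t\geq0$, then transitivity of $\leq$ gives $t\cdot f\otimes f\leq B$, so the supremum defining $\lambda(B,f)$ ranges over a set containing the one defining $\lambda(A,f)$, whence $\lAf\leq\lambda(B,f)$.

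\textbf{(ii)$\Rightarrow$(i).} The key idea is to test the hypothesis along the ray $f=Ax$ for an arbitrary fixed $x\in E$. First I would record that $(y,z)\mapsto\dual{Ay}{z}$ is a positive semidefinite sesquilinear form, so the Cauchy--Schwarz inequality gives $\abs{\dual{Ay}{x}}^2\leq\dual{Ay}{y}\dual{Ax}{x}$ for all $y\in E$. If $\dual{Ax}{x}=0$, this forces $\dual{Ax}{y}=0$ for every $y$, hence $Ax=0$ since the anti-duality separates points, and then $\dual{Ax}{x}=0\leq\dual{Bx}{x}$ is exactly what we want. So we may assume $\dual{Ax}{x}>0$; in particular $f\coloneqq Ax\neq0$. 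Using $\dual{Ax}{y}=\overline{\dual{Ay}{x}}$, the Cauchy--Schwarz bound reads $\abs{\dual fy}^2\leq\dual{Ax}{x}\cdot\dual{Ay}{y}$ for all $y\in E$, while the choice $y=x$ shows no smaller constant can work; hence by Theorem~\ref{T:3.1} the smallest admissible constant is $m_f=\dual{Ax}{x}$ and $\lAf=1/\dual{Ax}{x}>0$.

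Now by hypothesis $\lambda(B,f)\geq\lAf>0$, so Theorem~\ref{T:3.1} applies to $B$ as well: there is a smallest constant $m_f'$ with $\abs{\dual fy}^2\leq m_f'\cdot\dual{By}{y}$ for all $y\in E$, and $\lambda(B,f)=1/m_f'$. From $1/m_f'\geq1/\dual{Ax}{x}$ we obtain $m_f'\leq\dual{Ax}{x}$. Plugging $y=x$ into the inequality valid for $m_f'$ yields
\[
\dual{Ax}{x}^2=\abs{\dual fx}^2\leq m_f'\cdot\dual{Bx}{x}\leq\dual{Ax}{x}\cdot\dual{Bx}{x},
\]
and dividing by $\dual{Ax}{x}>0$ gives $\dual{Ax}{x}\leq\dual{Bx}{x}$. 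Since $x\in E$ was arbitrary, $A\leq B$.

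I do not expect a genuine obstacle here; the one point needing care is the explicit identification $\lambda(A,Ax)=1/\dual{Ax}{x}$, which is precisely where Theorem~\ref{T:3.1}(ii) together with its minimality clause is used, and the insight that testing along the ray $f=Ax$ converts the pointwise hypothesis on strength functions into the pointwise operator inequality.
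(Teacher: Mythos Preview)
Your proof is correct and follows essentially the same approach as the paper: both test along the ray determined by $Ax$ and invoke the Cauchy--Schwarz inequality for the positive form $(y,z)\mapsto\dual{Ay}{z}$. The only cosmetic differences are that the paper argues by contradiction with the normalized ray $f=\dual{Ax}{x}^{-1}Ax$ and reads off $\lambda(A,f)\geq1$ directly from the definition of the strength function, whereas you proceed directly and compute the exact value $\lambda(A,Ax)=1/\dual{Ax}{x}$ via Theorem~\ref{T:3.1}.
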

\begin{proof}
(i)$\Rightarrow$(ii): If $A\leq B$ and $\lambda \cdot f\otimes f\leq A$ for some $\lambda $ then also $\lambda \cdot f\otimes f\leq B$ hence $\lambda (A,f)\leq \lambda(B,f)$.

(ii)$\Rightarrow$(i): Assume $\lambda(A,\cdot)\leq \lambda(B,\cdot)$ everywhere on $F\setminus\{0\}$. By contradiction suppose $\dual{Ax}{x}>\dual{Bx}{x}$ for some $x$ and set $f\coloneqq \dual{Ax}{x}^{-1}\cdot Ax$. For every $y\in E$, 
\begin{equation*}
    \dual{(f\otimes f)(y)}{y}=\frac{\abs{\dual{Ax}{y}}^2 }{\dual{Ax}{x}}\leq \dual{Ay}{y}
\end{equation*}
by the Cauchy-Schwarz inequality $\abs{\dual{Ax}{y}}^2\leq \dual{Ax}{x}\dual{Ay}{y}$. Hence $\lambda(A,f)\geq 1$ and thus, by assumption,
\begin{equation*}
    \dual{Bx}{x}\geq \dual{(f\otimes f)(x)}{x}=\dual{Ax}{x},
\end{equation*}
which leads to a contradiction.
\end{proof}
\begin{corollary}
Two positive operators are identical if and only if so are their strength functions. 
\end{corollary}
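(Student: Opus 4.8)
The plan is to read this off immediately from the preceding theorem, which identifies the L\"owner order on $\lefpoz$ with the pointwise domination of strength functions, together with the antisymmetry of that order. For the forward implication there is essentially nothing to prove: the quantity $\lAf=\sup\set{t\geq 0}{t\cdot f\otimes f\leq A}$ is manifestly a function of $A$ alone, so $A=B$ forces $\lambda(A,\cdot)=\lambda(B,\cdot)$ on $F\setminus\{0\}$.

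For the converse, I would assume $\lambda(A,\cdot)=\lambda(B,\cdot)$ everywhere on $F\setminus\{0\}$. Then both $\lambda(A,f)\leq\lambda(B,f)$ and $\lambda(B,f)\leq\lambda(A,f)$ hold for every nonzero $f\in F$, so two applications of the previous theorem yield $A\leq B$ and $B\leq A$ simultaneously. Consequently $\dual{Ax}{x}=\dual{Bx}{x}$ for all $x\in E$. Since $A$ and $B$ are positive, hence symmetric, a polarization argument (testing on $x+y$ and $x+iy$ and using $\dual{Ay}{x}=\overline{\dual{Ax}{y}}$) upgrades this to $\dual{Ax}{y}=\dual{Bx}{y}$ for all $x,y\in E$; because the anti-duality function separates the points of $F$, this gives $Ax=Bx$ for every $x\in E$, i.e.\ $A=B$.

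The only point deserving a word is the antisymmetry of $\leq$ on $\lefpoz$ — that equality of the diagonal forms $x\mapsto\dual{Ax}{x}$ already forces $A=B$ — but this is exactly the polarization step just described and is in any case implicit in the fact that $(\lefpoz,\leq)$ was introduced as a partially ordered set. I do not expect any genuine obstacle here; the corollary is a one-line consequence of the ordering characterization.
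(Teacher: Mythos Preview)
Your argument is correct and is precisely the intended one: the paper states the corollary without proof, as it follows immediately from the preceding theorem together with the antisymmetry of the partial order $\leq$ on $\lefpoz$. Your polarization step merely makes explicit why $\dual{Ax}{x}=\dual{Bx}{x}$ for all $x$ forces $A=B$, which the paper takes for granted when it declares $(\lefpoz,\leq)$ a partially ordered set.
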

%\section{Some properties of the strength function}

\begin{remark}
From the very definition of the strength function it is readily check the following elementary properties: 

Let $A,B\geq 0$ and $0\neq f\in F$, then
\begin{enum1}
    \item $\lambda(\alpha A,f )=\alpha\lambda(A,f )$ for every $\alpha\geq 0$,
    \item $\lambda(\alpha A,f)+\lambda ((1-\alpha)B,f)\geq \alpha\lambda(A,f)+(1-\alpha)\lambda (B,f)$ for every $\alpha\in[0,1]$,
    \item $\lambda(A+B,f)\geq \lambda(A,f)+\lambda(B,f)$. 
\end{enum1}
\end{remark}
\section{Supremum of positive operators}
In this section, we are going to prove the  Kadison's anti-lattice theorem in the setting of positive operators on an anti-dual pair. Namely, we show that the supremum of two positive operators $A$ and $B$  exists if and only if they are comparable (that is, $A\leq B$ or $B\leq A$). %Roughly speaking, $\lefpoz$ is strongly non-lattice as a partially ordered space can be.

\begin{theorem}\label{T:4.1}
Let $(E,F)$ be a $w^*$-sequentially complete anti-dual pair and let $A, B\in\lefpoz$ positive operators. Then the following two statements are equivalent:
\begin{enumerate}[label=\textup{(\roman*)}]
    \item the supremum $A\vee B$ exists,
    \item $A$ and $B$ are comparable.
\end{enumerate}
\end{theorem}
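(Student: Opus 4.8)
The implication (ii)$\Rightarrow$(i) is trivial: if $A\leq B$ then $A\vee B=B$, and symmetrically. So the work is all in (i)$\Rightarrow$(ii), and I would prove the contrapositive: assuming $A$ and $B$ are \emph{not} comparable, I will exhibit a family of upper bounds of $\{A,B\}$ that has no least element. The guiding principle, following Kadison, is to reduce to a two-dimensional obstruction: non-comparability should produce a vector $x_0$ with $\dual{Ax_0}{x_0}<\dual{Bx_0}{x_0}$ and a vector $x_1$ with $\dual{Ax_1}{x_1}>\dual{Bx_1}{x_1}$, and on the ``plane'' spanned by the associated rays one can play the classical $2\times 2$ game that shows $A\vee B$ cannot exist.

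Concretely, I would first suppose a supremum $S\coloneqq A\vee B$ does exist. Since $S\geq A$ and $S\geq B$, by the order characterization via strength functions (the second theorem of Section 3) we have $\lambda(A,f)\leq\lambda(S,f)$ and $\lambda(B,f)\leq\lambda(S,f)$, i.e.\ $\lambda(S,f)\geq\max\{\lambda(A,f),\lambda(B,f)\}$ for all $0\neq f\in F$. The heart of the matter is to show that minimality of $S$ forces $\lambda(S,f)=\max\{\lambda(A,f),\lambda(B,f)\}$ for \emph{every} $f$, and then that this ``max'' function is itself the strength function of a positive operator only when one of $A,B$ dominates the other. For the first part I would argue that if $\lambda(S,f_0)>\max\{\lambda(A,f_0),\lambda(B,f_0)\}$ at some $f_0$, one can shave a rank-one piece $\varepsilon\, f_0\otimes f_0$ off $S$ and still retain an upper bound of $\{A,B\}$, contradicting minimality — here I must be careful that $S-\varepsilon f_0\otimes f_0$ stays $\geq A$ and $\geq B$, which is exactly guaranteed by the strict strength inequality together with the order-characterization theorem (plus continuity of $t\mapsto\lambda(S-t f_0\otimes f_0,f_0)$, which follows from its definition as a supremum of a set of $t$'s).

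For the second part — that $\max\{\lambda(A,\cdot),\lambda(B,\cdot)\}$ being a strength function forces comparability — I would use non-comparability to pick $u,v\in E$ with $\dual{Au}{u}<\dual{Bu}{u}$ and $\dual{Av}{v}>\dual{Bv}{v}$, and set $g\coloneqq Bu$, $h\coloneqq Av$ (rescaled as in the proof of the order-characterization theorem, so that $\lambda(B,g)\geq 1>\lambda(A,g)$ and $\lambda(A,h)\geq 1>\lambda(B,h)$). Then I would examine the strength of $S$ along rays of the form $f_\theta$ in the two-dimensional subspace of $F$ spanned by $g$ and $h$: on one hand $\lambda(S,f_\theta)=\max\{\lambda(A,f_\theta),\lambda(B,f_\theta)\}$ by the previous paragraph; on the other hand, the map $f\mapsto\lambda(S,f)^{-1}=\|\xi_f\|_{\hil_S}^2$ (Theorem \ref{T:3.1}) is, where finite, the restriction of a quadratic form — it is $\inf$ over representations or more precisely the squared $\hil_S$-norm of the Riesz vector, hence behaves like $1/(\text{positive quadratic form})$ along a pencil. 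Requiring $1/\max\{\lambda(A,f_\theta),\lambda(B,f_\theta)\}=\min\{1/\lambda(A,f_\theta),1/\lambda(B,f_\theta)\}$ to coincide with such a reciprocal-quadratic expression along the whole pencil connecting $g$ and $h$ is impossible unless one of the two quadratic forms dominates the other throughout, which back-translates to $A\leq B$ or $B\leq A$.

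\textbf{Main obstacle.} The delicate step is the last one: turning ``$\lambda(S,\cdot)^{-1}$ is a reciprocal of a quadratic form'' into a rigid enough statement to rule out the min of two such reciprocals unless they are nested. In the Hilbert-space/matrix setting this is the familiar computation that the function $\theta\mapsto\min\{a\cos^2\theta+\ldots, b\cos^2\theta+\ldots\}$ on a $2$-dimensional space has a ``corner'' incompatible with being $1/(\text{quadratic})$; the work here is to carry it out intrinsically on the anti-dual pair, using only the factorization $A=J_A J_A^*$, $B=J_B J_B^*$, $S=J_S J_S^*$ and Theorem \ref{T:3.1}, without coordinates. I expect that restricting everything to the (at most) two-dimensional span of $g$ and $h$ inside $F$ and the corresponding finite-dimensional compressions of $A$, $B$, $S$ reduces this cleanly to Kadison's original $2\times 2$ argument, so the real content is setting up that reduction carefully and checking the strength function localizes correctly under it.
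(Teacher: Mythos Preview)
Your approach is genuinely different from the paper's, and the key step has a real gap.

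The paper argues directly: assuming $A,B$ are not comparable, it takes an \emph{arbitrary} upper bound $T$ (necessarily $T\neq A,B$) and manufactures another upper bound $S$ that is not comparable with $T$, so no least upper bound exists. The construction splits into two cases according to whether $\ran J_{T-A}\cap\ran J_{T-B}$ is nontrivial; in either case $S$ is $T$ plus an explicit rank-$\leq 2$ perturbation built from vectors in those ranges. The strength function enters only via Theorem~\ref{T:3.1} (to extract rank-one pieces below $T-A$ and $T-B$), not via the order characterization.

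Your Step~1 --- forcing $\lambda(S,f_0)=\max\{\lambda(A,f_0),\lambda(B,f_0)\}$ by ``shaving off'' $\varepsilon\, f_0\otimes f_0$ --- does not work as written. For $S-\varepsilon f_0\otimes f_0\geq A$ you need $\varepsilon f_0\otimes f_0\leq S-A$, i.e.\ $\varepsilon\leq\lambda(S-A,f_0)$, and this is \emph{not} implied by the pointwise inequality $\lambda(S,f_0)>\lambda(A,f_0)$. Concretely, in $\hil=\dupC^2$ take $A=\left(\begin{smallmatrix}1&1\\1&1\end{smallmatrix}\right)$ and $S=\left(\begin{smallmatrix}2&1\\1&1\end{smallmatrix}\right)$, so $S-A=\left(\begin{smallmatrix}1&0\\0&0\end{smallmatrix}\right)\geq 0$; with $f_0=e_2$ one has $\lambda(S,e_2)=\tfrac12>0=\lambda(A,e_2)$, yet $\lambda(S-A,e_2)=0$ and $S-\varepsilon\, e_2\otimes e_2\not\geq A$ for any $\varepsilon>0$. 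The order characterization says $C\geq A$ iff $\lambda(C,g)\geq\lambda(A,g)$ for \emph{every} $g$, and subtracting $\varepsilon f_0\otimes f_0$ can lower the strength along rays $g\neq f_0$ below $\lambda(A,g)$; a strict inequality at the single ray $f_0$ gives no control there. To rescue the shaving idea you would need $f_0\in\ran J_{S-A}\cap\ran J_{S-B}$ --- but arranging this is exactly the paper's Case~1, and when that intersection is $\{0\}$ a rank-one perturbation cannot suffice, which is why the paper passes to a rank-two perturbation in Case~2. Your Step~2 also hides work: the ``reciprocal-quadratic'' description $\lambda(A,f_\theta)^{-1}=\|\xi_{f_\theta}\|_A^2$ along the pencil requires the whole pencil to lie in $\ran J_A$ (and likewise for $B$), and with your choices $g=Bu$, $h=Av$ there is no reason $g\in\ran J_A$ or $h\in\ran J_B$.
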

\begin{proof}
The comparability of $A$ and $B$ trivially implies the existence of $A\vee B$. For the opposite direction we suppose that $T\in\lefpoz\setminus\{A, B\}$ such that $T\ge A$ and $T\ge B$ and show that there exists $S\in\lefpoz$ such that $S\ge A$ and $S\ge B$, but $S$ is not comparable with $T$.

In the first case we suppose that the intersection of $\ran(J_{T-A})$ and $\ran(J_{T-B})$ contains a non-zero vector $e\in\ran(J_{T-A})\cap\ran(J_{T-B})$. By Theorem \ref{T:3.1} there exists $\lambda>0$ such that $\lambda\cdot e\otimes e\le T-A$ and $\lambda\cdot e\otimes e\le T-B$. Let $f\in F\setminus\mathbb{C}\cdot e$, then with
\begin{equation*}
    S\coloneqq T-\lambda\cdot e\otimes e+f\otimes f
\end{equation*}
we have $S\ge A$ and $S\ge B$, but $S$ is apparently not comparable with $T$.

In the second case we suppose that $\ran(J_{T-A})\cap\ran(J_{T-B})=\{0\}$. By Theorem \ref{T:3.1} there exist $e\in \ran(J_{T-A})$ and $f\in\ran (J_{T-B})$ such that $e\otimes e\le T-A$ and $f\otimes f\le T-B$. Let us define
\begin{equation*}
    S_0\coloneqq e\otimes e+2e\otimes f+2f\otimes e+f\otimes f
\end{equation*}
and
\begin{equation*}
    S:=T+\frac{1}{3}S_0.
\end{equation*}
Clearly, $S_0$ and $S$ are symmetric operators. We claim that  $S\ge A$ and $S\ge B$. Indeed, we have
\begin{equation*}
    S_0+3e\otimes e=(2e+f)\otimes(2e+f)\ge 0,
\end{equation*}
which implies
\begin{equation*}
    -\frac{1}{3}S_0\le e\otimes e\le T-A,
\end{equation*}
therefore
\begin{equation*}
    \frac{1}{3}S_0\ge A-T.
\end{equation*}
A similar argument shows that $  \frac{1}{3}S_0\ge B-T$.
%\begin{equation*}
%    S_0+3f\otimes f=(2f+e)\otimes(2f+e)\ge 0,
%\end{equation*}
%therefore we get
%\begin{equation*}
%    -\frac{1}{3}S_0\ge f\otimes f\ge T-B,
%\end{equation*}
%and
%\begin{equation*}
 %   \frac{1}{3}S_0\ge B-T.
%\end{equation*}
As a consequence we see that
\begin{equation*}
    S=T+\frac{1}{3}S_0\ge T+(B-T)=B,
\end{equation*}
and similarly, $S\geq A$. However, the symmetric operator $S_0$ is not positive. For let $x\in E$ be  such that $\langle e,x\rangle=1$, and $\langle f,x\rangle=-1$, then $\langle S_0x,x\rangle=-2$. Consequently,  $S$ and $T$ are not comparable, which proves the theorem.
\end{proof}

\section{Infimum of positive operators}
Let $A,B\in\lefpoz$ be positive operators and consider the corresponding $A$-, and $B$-absolute continuous parts $[A]B$ and $[B]A$, arising from the corresponding Lebesgue decompositions in the sense of \eqref{E:LebDec}. Thus,
\begin{equation}\label{E:[A]B}
    [A]B=\max\set{C\in\lefpoz}{C\leq B, C\ll A},
\end{equation}
and 
\begin{equation}\label{E:[B]A}
     [B]A=\max\set{C\in\lefpoz}{C\leq A, C\ll B},
\end{equation}
see \cite{TARCSAY2020Lebesgue}*{Theorem 3.3}. Assume for a moment that $[A]B\leq [B]A$, then clearly, $[A]B\leq A$, $[A]B\leq B$ and  for every $C\in\lefpoz$ such that $C\leq A$,$C\leq B$ we also have $C\leq [A]B$. This in turn means that $[A]B$ is the infimum of $A$ and $B$ in the cone $\lefpoz$. The situation is essentially same if we assume that $[B]A\leq [A]B$.

Ando's result \cite{ando1999problem}*{Theorem 6} states that, in the context of positive operators on Hilbert spaces, the infimum of two operators only in one of the above two cases exists. That is, if $A\wedge B$ exists in the cone $B_+(\hil)$, then $[A]B$ and $[B]A$ are comparable (and the smaller one is the infimum). 

Our aim in the present section is to establish the anti-dual pair analogue of Ando's mentioned result:
\begin{theorem}\label{T:5.1}
Let $(E,F)$ be a $w^*$-sequentially complete anti-dual pair and let $A, B\in\lefpoz$ positive operators. Then the following two statements are equivalent:
\begin{enumi}
 \item the infimum $A\wedge B$ exists in $\lefpoz$, 
 \item the corresponding absolute continuous parts $[A]B$ and $[B]A$ are comparable.
\end{enumi}
In any case,  $A\wedge B=\min\{[A]B,[B]A\}$.
\end{theorem}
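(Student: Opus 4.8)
The overall plan is to combine the maximality characterisations \eqref{E:[A]B}--\eqref{E:[B]A} of the absolutely continuous parts with a Kadison-type rank-one perturbation modelled on the proof of Theorem~\ref{T:4.1}. I would first dispose of the implication (ii)$\Rightarrow$(i) together with the displayed formula: assume, say, $[A]B\leq[B]A$. Since $[A]B\leq B$ and $[A]B\leq[B]A\leq A$, the operator $[A]B$ is a lower bound of $\{A,B\}$; and if $C\in\lefpoz$ is any lower bound, then $C\leq A$ implies $C\ll A$ (Preliminaries), so $C$ lies in the family $\set{C'\in\lefpoz}{C'\leq B,\ C'\ll A}$ whose greatest element is $[A]B$ by \eqref{E:[A]B}, whence $C\leq[A]B$. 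Thus $A\wedge B=[A]B=\min\{[A]B,[B]A\}$, and the case $[B]A\leq[A]B$ is symmetric.

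For (i)$\Rightarrow$(ii) I would set $C:=A\wedge B$. Exactly as above, $C\leq A$ gives $C\ll A$, hence $C\leq[A]B$ by \eqref{E:[A]B}, and symmetrically $C\leq[B]A$. Next I would record that $(A-C)\perp(B-C)$: if $0\leq G$ with $G\leq A-C$ and $G\leq B-C$, then $C+G$ is a lower bound of $\{A,B\}$, so $C+G\leq A\wedge B=C$ and $G=0$. Since $[A]B-C\leq B-C$ and $[B]A-C\leq A-C$, a common lower bound of $P:=[A]B-C$ and $Q:=[B]A-C$ is a common lower bound of $A-C$ and $B-C$, hence zero; thus $P\perp Q$, and by Theorem~\ref{T:3.1} this forces $\ran J_{P}\cap\ran J_{Q}=\{0\}$. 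It now suffices to prove $P=0$ or $Q=0$: then $[A]B$ and $[B]A$ are comparable and, by the previous paragraph, $A\wedge B=\min\{[A]B,[B]A\}$.

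So suppose, for contradiction, that $P\neq0$ and $Q\neq0$. By Theorem~\ref{T:3.1} choose $0\neq e\in\ran J_{P}$ and $0\neq f\in\ran J_{Q}$ with $e\otimes e\leq P$ and $f\otimes f\leq Q$; since $\ran J_{P}\cap\ran J_{Q}=\{0\}$ the vectors $e,f$ are linearly independent, so every prescribed pair $(\dual{e}{x},\dual{f}{x})\in\dupC^{2}$ is attained for some $x\in E$. Mimicking the proof of Theorem~\ref{T:4.1}, put
\[
    S_{0}:=e\otimes e+2\,e\otimes f+2\,f\otimes e+f\otimes f ,
\]
a symmetric operator that is \emph{not} positive (for $x$ with $\dual{e}{x}=1$, $\dual{f}{x}=-1$ one gets $\dual{S_{0}x}{x}=-2$). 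From $S_{0}+3\,e\otimes e=(2e+f)\otimes(2e+f)\geq0$ and $S_{0}+3\,f\otimes f=(e+2f)\otimes(e+2f)\geq0$ one obtains $-\tfrac13 S_{0}\leq e\otimes e\leq B-C$ and $-\tfrac13 S_{0}\leq f\otimes f\leq A-C$, so $S:=C-\tfrac13 S_{0}$ satisfies $S\leq A$ and $S\leq B$. If $S$ were positive it would be a lower bound of $\{A,B\}$, hence $S\leq A\wedge B=C$, forcing $S_{0}\geq0$ — a contradiction. The whole matter therefore reduces to arranging that $S\geq0$.

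Securing $S\geq0$ is the crux, and it is exactly where the infimum problem is genuinely harder than the anti-lattice theorem: in Theorem~\ref{T:4.1} the perturbed operator dominated the positive operator $A$ and was automatically positive, whereas here nothing a priori forces $C-\tfrac13 S_{0}\geq0$. My plan is to show that the rays $e,f$ may in fact be chosen inside $\ran J_{C}$ — equivalently, that neither $P$ nor $Q$ is singular with respect to $C$ — by exploiting that $P=[A](B-C)$ and $Q=[B](A-C)$ are absolutely continuous with respect to $A$ and $B$ while $C=A\wedge B$; granting $e,f\in\ran J_{C}$, Theorem~\ref{T:3.1} supplies a constant $m$ with $\dual{S_{0}x}{x}\leq m\,\dual{Cx}{x}$ for all $x\in E$, so replacing $e,f$ by sufficiently small multiples yields $\tfrac13 S_{0}\leq C$ and hence $S\geq0$. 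I expect this last reduction — ruling out that the ``absolutely continuous overshoot'' $[A]B-C$ can be singular to $A\wedge B$ — to be the main obstacle, and the point at which the Lebesgue decomposition theory of \cite{TARCSAY2020Lebesgue} is indispensable.
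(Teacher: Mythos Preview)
Your implication (ii)$\Rightarrow$(i) and the preliminary reductions for (i)$\Rightarrow$(ii) (namely $C\leq[A]B$, $C\leq[B]A$, $(A-C)\perp(B-C)$, hence $P\perp Q$ and $\ran J_P\cap\ran J_Q=\{0\}$) are all correct, and the rank-one perturbation $S=C-\tfrac13S_0$ is set up properly. The genuine gap is exactly the one you flag yourself: arranging $S\geq0$ by choosing $e\in\ran J_P\cap\ran J_C$ and $f\in\ran J_Q\cap\ran J_C$, i.e.\ showing $P\not\perp C$ and $Q\not\perp C$. Your proposed justification---``$P=[A](B-C)\ll A$ while $C=A\wedge B$''---does not do this job: absolute continuity of $P$ with respect to $A$ says nothing about singularity with respect to the possibly much smaller operator $C$. (The identity $P=[A](B-C)$ is also asserted without proof, and it is not clear that $[A]B-C\ll A$ follows from $[A]B\ll A$ and $C\leq[A]B$.)

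The paper proceeds quite differently and never attempts a rank-one construction at the level of $\lefpoz$. It first observes that $A\wedge B$ exists iff $[A]B\wedge[B]A$ does (and they coincide), so one may assume $A$ and $B$ are mutually absolutely continuous. It then passes to the auxiliary Hilbert space $\hil_{A+B}$, where $A=J\widetilde AJ^*$, $B=J\widetilde BJ^*$ with $\widetilde A+\widetilde B=I$ and $\ker\widetilde A=\ker\widetilde B=\{0\}$, and shows $\widetilde C=\widetilde A\wedge\widetilde B$ in $\mathscr B_+(\hil)$. Ando's spectral lemma then identifies $\widetilde C=\int_0^1\min(t,1-t)\,dE(t)$, and the contradiction (when $\sigma(\widetilde A)$ meets both $(0,\tfrac12)$ and $(\tfrac12,1)$) is obtained via Ando's partial-isometry operator $\widetilde D$, not a rank-one perturbation. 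The point is that the very fact you need---that the ``overshoots'' $P,Q$ are not singular to $C$---is only visible \emph{after} one knows $\widetilde C$ commutes with $\widetilde A$ and has the explicit spectral form above; once that is in hand your rank-one $S$ could indeed be made to work, but you would have already invoked the full Hilbert-space and spectral machinery that constitutes the paper's proof. So the proposed shortcut does not bypass the hard analytic core.
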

\begin{proof}
We only prove the non-trivial implication (i)$\Rightarrow$(ii). First of all we remark that $A\wedge B$ exists if and only if $[A]B\wedge [B]A$ exists, and in that case these two operators coincide. This is easily obtained from the maximality properties \eqref{E:[A]B} and \eqref{E:[B]A}. Recall also that $[A]B$ and $[B]A$ are mutually absolutely continuous according to \cite{TARCSAY2020Lebesgue}*{Theorem 3.6}, so we may assume without loss of generality that $A\ll B$ and $B<\ll A$. 

Consider now the auxiliary Hilbert space $\hil\coloneqq \hil^{}_{A+B}$ associated with the sum $A+B$, and denote by $\sip{\cdot}{\cdot}$ its inner product and by $J\coloneqq J_{A+B}^{}$ the corresponding canonical embedding of $\hil$ into $F$. Since the nonnegative forms 
\begin{equation*}
    ((A+B)x,(A+B)y)\mapsto \dual{Ax}{y},\qquad  ((A+B)x,(A+B)y)\mapsto  \dual{Bx}{y},
\end{equation*}
defined on the dense subspace $\ran (A+B)\subseteq \hil$ are obviously bounded by norm $\leq 1$, the Lax-Milgram lemma provides us two positive operators $\widetilde A,\widetilde B\in\mathscr B(\hil)$ with $\|\widetilde A\|,\|\widetilde B\|\leq 1$, which satisfy
\begin{equation*}
   \sip{\widetilde A(A+B)x}{(A+B)y}=\dual{Ax}{y},\qquad 
   \sip{\widetilde B(A+B)x}{(A+B)y}=\dual{Bx}{y}. 
\end{equation*}
Using the canonical property 
\begin{equation*}
    J^*x=(A+B)x,\qquad x\in E
\end{equation*}
we obtain that
\begin{equation*}
     A=J\widetilde AJ^*\qquad\mbox{and}\qquad B=J\widetilde{B} J^*.
\end{equation*}
Observe also that 
\begin{equation*}
    \sip{(\widetilde A+\widetilde B)J^*x}{J^*y}=\dual{(A+B)x}{y}=\sip{J^*x}{J^*y},\qquad (x,y\in E),
\end{equation*}
whence 
\begin{equation*}
    \widetilde A+\widetilde B=I,
\end{equation*}
where $I$ stands for the identity operator on $\hil$. Note also that 
\begin{equation}\label{E:ker}
    \ker \widetilde A=\ker \widetilde B =\{0\},
\end{equation}
because $A\ll B$ and $B\ll A$ (see \cite{TARCSAY2020Lebesgue}*{Lemma 5.2}).

Denote by $C\coloneqq A\wedge B$ and let $\widetilde C\in\bh$ be the positive operator such that $C=J\widetilde C J^*$. We claim that $\widetilde C=\widetilde A\wedge \widetilde B$ in $\mathscr{B}_+(\hil)$. Indeed, it is clear on the one hand that $\widetilde  C\leq \widetilde A$ and $\widetilde C\leq \widetilde{B}$. On the other hand, consider a positive operator $\widetilde D\in B_+(\hil)$  such that $\widetilde D\leq \widetilde A, \widetilde{B}$. It is readily seen  that  $D\coloneqq J\widetilde{D}J^*\in \lefpoz$ satisfies $D\leq A,B$, whence $D\leq C$ which implies $\widetilde D\leq \widetilde C$.

Let $E$ be the spectral measure of $\widetilde A$. Following the reasoning of \cite{ando1999problem}*{Lemma 1} it follows that 
\begin{equation}\label{E:tn1-t}
    \widetilde A\wedge \widetilde B=\widetilde A\wedge (I-\widetilde A)=\int_{0}^{1} \min\{t,(1-t)\}\,dE(t).
\end{equation}
The statement of the theorem will be proved if we show that either  $\widetilde A\wedge \widetilde B=\widetilde A$ or $\widetilde A\wedge \widetilde B=\widetilde{B}$. By \eqref{E:tn1-t} this is equivalent to prove that 
\begin{equation}\label{E:spectrum}
    \sigma(\widetilde A)\subseteq [0,\tfrac12]\qquad\mbox{or}\qquad \sigma(\widetilde A)\subseteq [\tfrac12,1]
\end{equation}
holds on the spectrum of $\widetilde A$.
Suppose that is not the case. By \eqref{E:ker} we have $E(\{0\})=E(\{1\})=0$, hence there exists some $\epsilon>0$ such that none of the spectral projections 
\begin{equation*}
    P_1\coloneqq E([\tfrac12+3\epsilon,1-3\epsilon]) \qquad \mbox{and}\qquad P_2\coloneqq E([3\epsilon,\tfrac12-3\epsilon])
\end{equation*}
is  zero. Suppose  that $0<\dim P_2\leq \dim P_1$ and take a partial isometry $V\in\bh$ such that $[\ker V]^\perp\subseteq \ran P_2$ and $\ran V\subseteq \ran P_1$. Then, following Ando's treatment used in the proof of \cite{ando1999problem}*{Theorem 1}, one can prove that 
\begin{equation*}
    \widetilde D\coloneqq (\widetilde A-\epsilon)\cdot P_2+(\widetilde B-\epsilon)\cdot P_1+\sqrt{2}\epsilon (VP_2+P_2V^*)
\end{equation*}
satisfies $0\leq \widetilde D\leq \widetilde A,\widetilde B$, but  $\widetilde D$ is not comparable with $\widetilde C$. This, however contradicts the fact that $\widetilde C=\widetilde A\wedge \widetilde B$, which proves the theorem.
\end{proof}

%%%%%%%%%%%%%%%%%%%%%%%%%%%%%%%%%%%%%%%%%%%%%%%%%%%%%%%%%%%%%%%%%%%%%%%%%%%%%%%%%%%%%%%%%%%%%%%%%%%%%%%%%%%%%%%%%%%%%%%%%%%%%%%%%%%%%%%%%%%%%%%%%%%%%%%%%%

%\section{Applications}

  As a direct application of  Theorems \ref{T:4.1} and \ref{T:5.1}, we obtain the following result regarding the supremum and infimum of non-negative forms over a given vector space. With the latter statement we retrieve \cite{titkos2012ando}*{Theorem 3}.
\begin{corollary}
Let $\sform$ and $\tform$ be nonnegative forms on a complex vector space $\D$. 
\begin{enuma}
    \item The supremum $\tform\vee \sform$ exists if and only if $\tform\leq \sform$ or $\sform\leq \tform$,
    \item The infimum $\tform\wedge\sform$ exists if and only if $[\tform]\sform\leq [\sform]\tform$ or $[\sform]\tform\leq [\tform]\sform$.
\end{enuma}
\end{corollary}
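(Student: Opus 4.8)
The plan is to realize nonnegative forms on $\D$ as positive operators on an appropriate anti-dual pair and then invoke Theorems~\ref{T:4.1} and~\ref{T:5.1} directly. I would take the anti-dual pair $(\D,\bar\D^*)$, where $\bar\D^*$ is the algebraic anti-dual of $\D$; this pair is $w^*$-sequentially complete by the remark recalled in the Preliminaries, so both theorems apply to $\mathscr{L}_+(\D,\bar\D^*)$. To a nonnegative form $\tform$ on $\D$ I would associate the operator $T_{\tform}\colon\D\to\bar\D^*$ defined by $\dual{T_{\tform}x}{y}\coloneqq\tform(x,y)$, with the arguments ordered so that the sesquilinearity of $\dual{\cdot}{\cdot}$ matches that of forms. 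Then $\tform$ is Hermitian if and only if $T_{\tform}$ is symmetric, and $\dual{T_{\tform}x}{x}=\tform(x,x)\ge 0$, so $T_{\tform}\in\mathscr{L}_+(\D,\bar\D^*)$; conversely every operator $A$ in this cone equals $T_{\tform}$ for the form $(x,y)\mapsto\dual{Ax}{y}$. Hence $\tform\mapsto T_{\tform}$ is a bijection, and since $\tform\le\sform$ unravels to $\tform(x,x)\le\sform(x,x)$ for all $x$, i.e.\ to $\dual{T_{\tform}x}{x}\le\dual{T_{\sform}x}{x}$ for all $x$, i.e.\ to $T_{\tform}\le T_{\sform}$, this bijection is an order isomorphism.

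An order isomorphism, together with its (also order-preserving) inverse, transports existing suprema and infima in both directions. Thus $\tform\vee\sform$ exists among the nonnegative forms if and only if $T_{\tform}\vee T_{\sform}$ exists in $\mathscr{L}_+(\D,\bar\D^*)$, and likewise for the infimum, the form-valued and operator-valued extrema corresponding under $T_{(\cdot)}$. Since $\tform,\sform$ are comparable precisely when $T_{\tform},T_{\sform}$ are, part~(a) is immediate from Theorem~\ref{T:4.1}.

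For part~(b) the key remaining point is that $T_{(\cdot)}$ carries the absolutely continuous part of forms to that of operators, that is, $T_{[\tform]\sform}=[T_{\tform}]T_{\sform}$. I would check that under the correspondence the form relations $\sform\ll\tform$ and $\tform\perp\sform$ (in the sense of \cite{titkos2012ando}) become exactly the operator relations $T_{\sform}\ll T_{\tform}$ and $T_{\tform}\perp T_{\sform}$ from the Preliminaries; this is a direct unwinding of the definitions, using $\dual{T_{\tform}x_n}{x_n}=\tform(x_n,x_n)$ and $\dual{T_{\tform}(x_n-x_m)}{x_n-x_m}=\tform(x_n-x_m,x_n-x_m)$, and the fact that $C\le T_{\tform},T_{\sform}$ corresponds to a nonnegative form below both $\tform$ and $\sform$. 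Granting this, the maximality description \eqref{E:[A]B} of $[T_{\tform}]T_{\sform}$ as the largest positive operator $\le T_{\sform}$ which is $\ll T_{\tform}$ matches the maximality description of $[\tform]\sform$ as the largest nonnegative form $\le\sform$ which is $\tform$-absolutely continuous, so the two correspond. Applying Theorem~\ref{T:5.1} to $T_{\tform},T_{\sform}$ and reading the conclusion back through $T_{(\cdot)}$ yields (b), and also $\tform\wedge\sform=\min\{[\tform]\sform,[\sform]\tform\}$.

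I expect the main obstacle to be exactly this compatibility of the two Lebesgue decomposition theories: identifying $[\tform]\sform$ with $[T_{\tform}]T_{\sform}$ and the accompanying $\ll$- and $\perp$-relations. If the specialization of the operator Lebesgue decomposition of \cite{TARCSAY2020Lebesgue} to the form-theoretic one of \cite{titkos2012ando} is already on record, this step is a citation; otherwise it is a short but careful unwinding of definitions. Everything else — the order isomorphism and the transport of extrema — is routine once the conjugate-linearity conventions are fixed.
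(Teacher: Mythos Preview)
Your proposal is correct and follows essentially the same route as the paper: realize forms as positive operators on the $w^*$-sequentially complete anti-dual pair $(\D,\bar\D^*)$ via $\dual{T_{\tform}x}{y}=\tform(x,y)$, observe this is an order isomorphism, and invoke Theorems~\ref{T:4.1} and~\ref{T:5.1}. You are in fact more careful than the paper, which simply asserts that the order-preserving bijection makes both statements follow; your explicit identification of the compatibility $T_{[\tform]\sform}=[T_{\tform}]T_{\sform}$ via the matching of the $\ll$ and $\perp$ relations fills in a point the paper leaves implicit.
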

\begin{proof}
Denote by $\bar \D^*$ the conjugate algebraic dual of $\D$ (that is, $\bar\D^*$ consists of all anti-linear forms $f:\D\to\dupC$). Then $(\D,\bar \D^*)$, endowed with the natural anti-duality $\dual\cdot\cdot$, forms a $w^*$-sequentially continuous anti-dual pair. Note that every nonnegative form $\tform$ on $\D$ induces a positive operator $T:\D\to\bar \D^*$ by the correspondence
\begin{equation*}
    \dual{Tx}{y}\coloneqq \tform(x,y),\qquad (x,y\in\D).
\end{equation*}
 Conversely, every positive operator  $T:\D\to\bar \D^*$  defines a nonnegative form in the most natural way. The correspondence $\tform\mapsto T$ is an order preserving bijection between nonnnegative forms and positive operators, so that both statements (a) and (b) of the theorem follows from Theorems \ref{T:4.1} and \ref{T:5.1}, respectively. 
\end{proof}
\bibliography{references.bib}
\end{document}